\theoremstyle{plain}
\newtheorem{thm}{Theorem}[section]
\newtheorem*{thm*}{Theorem}
\newtheorem{prop}[thm]{Proposition}
\newtheorem{lem}[thm]{Lemma}
\theoremstyle{definition}
\theoremstyle{remark}
\newtheorem{rem}[thm]{Remark}%[section]
\renewcommand{\epsilon}{\varepsilon}
\newcommand{\ric}{\operatorname{Ric}}
\newcommand{\bm}{\partial M}
\newcommand{\second}{{\rm I\hspace{-.01em}I}}
\title[Hamilton type entropy formula along the Ricci flow]{Hamilton type entropy formula along the Ricci flow\\ on surfaces with boundary}
\author{Keita Kunikawa}
\address{Cooperative Faculty of Education, Utsunomiya University, 350 Mine-Machi, Utsunomiya, 321-8505, Japan}
\email{kunikawa@cc.utsunomiya-u.ac.jp}
\author{Yohei Sakurai}
\address{Department of Mathematics, Saitama University, 255 Shimo-Okubo, Sakura-ku, Saitama-City, Saitama, 338-8570, Japan}
\email{ysakurai@rimath.saitama-u.ac.jp}
\subjclass[2010]{Primary 53C44; Secondly 58J32}
\keywords{Hamilton type monotonicity; Entropy; Ricci flow; Manifold with boundary}
\date{May 1, 2021}
\begin{document}
\maketitle

\begin{abstract}
In this article,
we establish a monotonicity formula of Hamilton type entropy along Ricci flow on compact surfaces with boundary.
We also study the relation between our entropy functional and the $\mathcal{W}$-functional of Perelman type.
\end{abstract}

%%%%%%%%%%%%%%%%%%%%%%%%%%%%
%%%%%%%%%%%%%%%%%%%%%%%%%%%%
%%%%%%%%%%%%%%%%%%%%%%%%%%%%
\section{Introduction}

The aim of this short note is to formulate a monotonicity of Hamilton type entropy along Ricci flow on compact surfaces with boundary.
We further aim to investigate the relation between our entropy functional and the so-called $\mathcal{W}$-functional of Perelman type.

%%%%%%%%%%%%%%%%%%%%%%%%%%%%
\subsection{Hamilton monotonicity}\label{sec:Ancient super Ricci flow}

Let $(M,g(t))_{t\in [0,T)}$ be a manifold equipped with a time-dependent Riemannian metric.
Hamilton \cite{H1} has introduced the notion of \textit{Ricci flow}
\begin{equation*}\label{eq:RF}
\partial_{t}g= -2 \ric.
\end{equation*}
He \cite{H2} has studied the (normalized) Ricci flow on closed surfaces,
and established some convergence results.
Also,
he has obtained a monotonicity of a certain entropy functional.
We consider a Ricci flow $(S^2,g(t))_{t\in [0,T)}$ on the two dimensional sphere whose initial metric has positive scalar curvature $R>0$, and volume $8 \pi$.
Notice that
the positivity of $R$ is preserved,
and the volume $v(S^2)$ evolves by $8\pi(1-t)$ (see e.g., \cite[Lemma 4.4]{B4}).
The \textit{Hamilton entropy functional} is defined by
\begin{equation}\label{eq:entropy without bdry}
\mathcal{E}(t):=\int_{S^2}\, R\log R \,dv+8\pi \log (1-t),
\end{equation}
which is non-negative (see e.g., \cite[Lemma 4.5]{B4}).
This type of entropy functional appears in the context of not only the Ricci flow theory but also minimal surface theory and Gauss curvature flow theory (see \cite[Subsection 2.3]{BM}, \cite{GPT}).

The monotonicity in \cite{H2} can be stated as follows (see \cite[Theorem 7.2]{H2}, and cf. \cite[Proposition 4.7]{B4}, \cite[Lemma 2.1]{Ch}, \cite[Exercise 9.9]{CLN}):
\begin{thm}[\cite{H2}]\label{thm:Hamilton monotonicity}
In the above situation,
let $f$ be a smooth function on $S^2$ solving
\begin{equation*}
R+\Delta f=\frac{1}{1-t}.
\end{equation*}
Then we have
\begin{equation*}
\frac{d}{dt}\,\mathcal{E}(t)=-\int_{S^2}\,\left(R\,\Vert \nabla f- \nabla \log R\Vert^2+2\left\Vert \frac{1}{2}\,R\,g+\nabla^2 f-\frac{1}{2(1-t)}g\right\Vert^2 \right)\,dv \leq 0.
\end{equation*}
In particular,
$\mathcal{E}(t)$ is non-increasing.
\end{thm}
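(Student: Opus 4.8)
The plan is to differentiate $\mathcal{E}(t)$ directly and then reorganize the resulting integrand into the two non-negative squares. First I would record the two-dimensional evolution equations: since $\ric=\tfrac{1}{2}Rg$ on a surface, the flow reads $\partial_{t}g=-Rg$, the volume element satisfies $\partial_{t}(dv)=-R\,dv$, and the scalar curvature obeys $\partial_{t}R=\Delta R+R^{2}$. Writing $r:=\tfrac{1}{1-t}$, which is the spatial average of $R$ because $\int_{S^{2}}R\,dv=8\pi$ by Gauss--Bonnet while $v(S^{2})=8\pi(1-t)$, the defining equation for $f$ becomes $\Delta f=r-R$.

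Next I would compute the time derivative. Using $\partial_{t}(R\log R)=(\Delta R+R^{2})(\log R+1)$ together with $\partial_{t}(dv)=-R\,dv$, and integrating by parts on the closed surface $S^{2}$ (so that $\int\Delta R\,dv=0$ and $\int\Delta R\,\log R\,dv=-\int\|\nabla R\|^{2}/R\,dv$), I expect to obtain
\begin{equation*}
\frac{d}{dt}\int_{S^{2}}R\log R\,dv=-\int_{S^{2}}\frac{\|\nabla R\|^{2}}{R}\,dv+\int_{S^{2}}R^{2}\,dv.
\end{equation*}
For the term $8\pi\log(1-t)$, I would rewrite $\tfrac{8\pi}{1-t}=r\int_{S^{2}}R\,dv$ and use $R-r=-\Delta f$ to convert the excess:
\begin{equation*}
\int_{S^{2}}R^{2}\,dv-\frac{8\pi}{1-t}=\int_{S^{2}}R(R-r)\,dv=-\int_{S^{2}}R\,\Delta f\,dv=\int_{S^{2}}\langle\nabla R,\nabla f\rangle\,dv.
\end{equation*}
This yields the compact intermediate identity $\tfrac{d}{dt}\mathcal{E}(t)=-\int_{S^{2}}\|\nabla R\|^{2}/R\,dv+\int_{S^{2}}\langle\nabla R,\nabla f\rangle\,dv$.

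The heart of the argument is to recognize this expression as (minus) the sum of the two squares. Expanding $R\|\nabla f-\nabla\log R\|^{2}=R\|\nabla f\|^{2}-2\langle\nabla f,\nabla R\rangle+\|\nabla R\|^{2}/R$ accounts for the gradient square and, after cancellation, reduces the claim to the integrated identity
\begin{equation*}
2\int_{S^{2}}\left\|\tfrac{1}{2}Rg+\nabla^{2}f-\tfrac{r}{2}g\right\|^{2}\,dv=\int_{S^{2}}\langle\nabla f,\nabla R\rangle\,dv-\int_{S^{2}}R\,\|\nabla f\|^{2}\,dv.
\end{equation*}
Here I would observe the key simplification $\tfrac{1}{2}Rg+\nabla^{2}f-\tfrac{r}{2}g=\nabla^{2}f-\tfrac{1}{2}(\Delta f)g$, the traceless Hessian of $f$, whose squared norm is $\|\nabla^{2}f\|^{2}-\tfrac{1}{2}(\Delta f)^{2}$.

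Finally I would close the identity via Bochner. Integrating $\tfrac{1}{2}\Delta\|\nabla f\|^{2}=\|\nabla^{2}f\|^{2}+\langle\nabla f,\nabla\Delta f\rangle+\ric(\nabla f,\nabla f)$ over $S^{2}$, with $\ric(\nabla f,\nabla f)=\tfrac{1}{2}R\|\nabla f\|^{2}$, gives $\int_{S^{2}}\|\nabla^{2}f\|^{2}\,dv=\int_{S^{2}}(\Delta f)^{2}\,dv-\tfrac{1}{2}\int_{S^{2}}R\|\nabla f\|^{2}\,dv$; combined with $\int_{S^{2}}(\Delta f)^{2}\,dv=\int_{S^{2}}\langle\nabla f,\nabla R\rangle\,dv$ (integrate by parts using $\nabla\Delta f=-\nabla R$), this matches the two sides exactly. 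The main obstacle is precisely this last reorganization: the theorem displays a pointwise non-negative integrand, whereas the computation naturally produces integrated quantities, so it is Bochner's formula (with the surface relation $\ric=\tfrac{1}{2}Rg$) and integration by parts that allow the manifestly non-positive form to be recovered.
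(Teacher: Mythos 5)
Your argument is correct and is essentially the route the paper takes: the paper cites \cite{H2} for this statement but proves its boundary generalization (Theorem \ref{thm:bdry Hamilton monotonicity}) by exactly this scheme --- differentiate the entropy, use $\Delta f=\overline{R}-R$ to trade $\int R^2\,dv$ for $\int(\Delta f)^2\,dv$, and reorganize via the Reilly formula and the traceless-Hessian identity $(\Delta f)^2-2\Vert\nabla^2 f\Vert^2=-2\Vert\nabla^2 f-\tfrac{\Delta f}{2}g\Vert^2$. Your integrated Bochner step is precisely the closed-surface specialization of the paper's Reilly-formula step (the boundary terms vanish on $S^2$), so the two proofs coincide.
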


%%%%%%%%%%%%%%%%%%%%%%%%%%%%
\subsection{Hamilton type monotonicity on surfaces with boundary}\label{sec:Main theorem}

One of the purposes of this note is to generalize Theorem \ref{thm:Hamilton monotonicity} for Ricci flow on compact surfaces with boundary.
Ricci flow on manifolds with boundary has been investigated by several authors,
which is not as much as that on manifolds without boundary (see e.g., \cite{G2}, \cite{Pu1}, \cite{Pu2}, \cite{S} for short time existence and uniqueness, \cite{B1}, \cite{B2}, \cite{B3}, \cite{C}, \cite{CM}, \cite{G1}, \cite{S} for convergence, \cite{CM}, \cite{Pu2} for entropy formulas of Perelman type).
Gianniotis \cite{G2} has established a quite general short time existence and uniqueness result (see \cite[Theorem 1.2]{G2}).
As summarized in \cite{G1},
such a result holds for a given initial metric, mean curvature and induced metric on the boundary satisfying compatibility conditions.
We keep in mind the setting of \cite{G2}. 

Let $(M,g(t))_{t\in [0,T)}$ be a Ricci flow on compact surface with boundary whose initial metric has positive scalar curvature $R>0$.
We further assume a Neumann type boundary condition
\begin{equation}\label{eq:Neumann}
R_{\nu}=0
\end{equation}
for all $t\in [0,T)$,
here the left hand side means the derivative of the scalar curvature $R$ in the direction of the outward unit normal vector $\nu$ on the boundary $\partial M$.
The condition \eqref{eq:Neumann} together with the evolution formula of $R$, strong maximum principle and parabolic Hopf lemma ensures that
the positivity of $R$ is preserved (cf. \eqref{eq:evolution} below, and \cite[Section 3]{BCP}).

\begin{rem}
Li \cite{L} investigated the (normalized) Ricci flow on compact surfaces with boundary under the Neumann type boundary condition \eqref{eq:Neumann}, and obtained a short time existence result (see \cite[Corollary 6]{L}).
\end{rem}

We introduce an entropy functional of Hamilton type as follows:
\begin{equation}\label{eq:entropy with bdry}
\mathcal{E}_{\partial}(t):=\int_{M}\, R\log R \,dv-\log \overline{R} \,\int_{M} R dv,
\end{equation}
where $\overline{R}$ denotes the average of $R$ defined as 
\begin{equation*}
\overline{R}:=\frac{1}{v(M)}\int_{M} \,R\,dv.
\end{equation*}
We will verify that
$\mathcal{E}_{\partial}(t)$ is non-negative (see Proposition \ref{prop:nonnegative} below).

\begin{rem}
In virtue of the Gauss-Bonnet theorem,
the entropy functional \eqref{eq:entropy without bdry} also can be written in the form of \eqref{eq:entropy with bdry}.
Similarly,
for surfaces with boundary,
if the volume with respect to the initial metric is $4\pi \chi(M)$,
then \eqref{eq:entropy with bdry} can be written as
\begin{align*}
\mathcal{E}_{\partial}(t)&=\int_{M}\, R\log R \,dv\\
&\quad +4\pi\chi(M)\left( 1-\frac{1}{2\pi \chi(M)}\int_{\partial M}\,\kappa\,ds   \right)\log \left(  \frac{(1-t)+\dfrac{1}{2\pi \chi(M)} \displaystyle\int^{t}_{0} \int_{\partial M}\kappa\, ds\, d\xi    }{1-\dfrac{1}{2\pi \chi(M)}  \displaystyle\int_{\partial M}\,\kappa\,ds   } \right)
\end{align*}
for the Euler characteristic $\chi(M)$ of $M$, and the geodesic curvature $\kappa$ of $\partial M$.
\end{rem}

We are now in a position to state one of our main results.
\begin{thm}\label{thm:bdry Hamilton monotonicity}
Let $(M,g(t))_{t\in [0,T)}$ be a Ricci flow on compact surface with boundary whose initial metric has positive scalar curvature.
We further assume the Neumann type boundary condition \eqref{eq:Neumann}.
Let $f$ be a smooth function on $M$ solving the Nuemann boundary problem
\begin{equation}\label{eq:test}
\begin{cases}
                                                    \displaystyle R+\Delta f= \overline{R} & \text{in $M$}, \\
                                                                                f_{\nu}=0           & \text{on $\partial M$}.
                                                   \end{cases}
\end{equation}
Then we have
\begin{align}\label{eq:main}
\frac{d}{dt}\,\mathcal{E}_{\partial}(t)&=-\int_{M}\,\left(R\,\Vert \nabla f-\nabla \log R \Vert^2+2\left\Vert \frac{1}{2}\,R\,g+\nabla^2f-\frac{1}{2}\,\overline{R}\,g \right\Vert^2\right)\,dv\\ \notag
                                                        &\quad -2\int_{\partial M} \kappa\, \Vert \nabla_{\partial M} (f|_{\partial M}) \Vert^2 ds.
\end{align}
In particular,
if $\partial M$ is convex with respect to the initial metric $($i.e., $\kappa(0)$ is non-negative$)$, then $\mathcal{E}_{\partial}(t)$ is non-increasing.
\end{thm}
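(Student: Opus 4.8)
The plan is to reduce the computation to the interior calculation behind Theorem~\ref{thm:Hamilton monotonicity}, keeping careful track of the boundary contributions that each integration by parts produces. First I would record the two-dimensional evolution equations $\partial_{t}R=\Delta R+R^{2}$ and $\partial_{t}(dv)=-R\,dv$, which follow from $\ric=\tfrac12 Rg$ on a surface. Integrating these and applying the divergence theorem, the Neumann condition \eqref{eq:Neumann} gives $\tfrac{d}{dt}\int_{M}R\,dv=\int_{M}\Delta R\,dv=\int_{\partial M}R_{\nu}\,ds=0$; hence $\int_{M}R\,dv$ is constant in $t$, the volume $v(M)$ decreases linearly, and $\tfrac{d}{dt}\log\overline{R}=\overline{R}$.

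Differentiating \eqref{eq:entropy with bdry} via $\partial_{t}(R\log R)=(\log R+1)(\Delta R+R^{2})$ and integrating $\int_{M}(\log R+1)\Delta R\,dv$ by parts, the boundary term vanishes by $R_{\nu}=0$, and I obtain $\tfrac{d}{dt}\mathcal{E}_{\partial}=-\int_{M}R\,\Vert\nabla\log R\Vert^{2}\,dv-\int_{M}R\,\Delta f\,dv$, where the last term comes from $\int_{M}R^{2}\,dv-\overline{R}\int_{M}R\,dv=\int_{M}R(R-\overline{R})\,dv$ together with the interior equation $R-\overline{R}=-\Delta f$ of \eqref{eq:test}.

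The core of the argument is to show that the right-hand side of \eqref{eq:main} coincides with this expression. I would expand $R\,\Vert\nabla f-\nabla\log R\Vert^{2}$ and note, exactly as in the closed case, that $\tfrac12 Rg+\nabla^{2}f-\tfrac12\overline{R}g$ is the trace-free Hessian $\nabla^{2}f-\tfrac12(\Delta f)g$, so that $2\Vert\tfrac12 Rg+\nabla^{2}f-\tfrac12\overline{R}g\Vert^{2}=2\Vert\nabla^{2}f\Vert^{2}-(\Delta f)^{2}$. The essential identity is the integrated Bochner formula $\int_{M}\Vert\nabla^{2}f\Vert^{2}\,dv=\int_{M}(\Delta f)^{2}\,dv-\tfrac12\int_{M}R\,\Vert\nabla f\Vert^{2}\,dv+\tfrac12\int_{\partial M}\nu(\Vert\nabla f\Vert^{2})\,ds$, where $f_{\nu}=0$ kills the boundary term arising when $\int_{M}\langle\nabla f,\nabla\Delta f\rangle\,dv$ is integrated by parts and $\ric=\tfrac12 Rg$ is used. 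After one further integration by parts $2\int_{M}\langle\nabla f,\nabla R\rangle\,dv=-2\int_{M}R\,\Delta f\,dv$ and the identity $\int_{M}\Delta f\,(R+\Delta f)\,dv=\overline{R}\int_{\partial M}f_{\nu}\,ds=0$, all interior terms recombine precisely as in Theorem~\ref{thm:Hamilton monotonicity}, with $\overline{R}$ in place of $1/(1-t)$.

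The main obstacle is the boundary term $\tfrac12\int_{\partial M}\nu(\Vert\nabla f\Vert^{2})\,ds$, and here the Neumann condition is used decisively. Since $f_{\nu}=0$, the gradient $\nabla f$ is tangent to $\partial M$, equal to $\nabla_{\partial M}(f|_{\partial M})$ there; using $\Hess f(\nu,X)=X(f_{\nu})-\langle\nabla f,\nabla_{X}\nu\rangle=-\second(X,\nabla f)$ for tangent $X$, the second fundamental form reduces in dimension two to the geodesic curvature, giving $\nu(\Vert\nabla f\Vert^{2})=2\,\Hess f(\nu,\nabla f)=-2\kappa\,\Vert\nabla_{\partial M}(f|_{\partial M})\Vert^{2}$. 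Hence $\tfrac12\int_{\partial M}\nu(\Vert\nabla f\Vert^{2})\,ds=-\int_{\partial M}\kappa\,\Vert\nabla_{\partial M}(f|_{\partial M})\Vert^{2}\,ds$, and propagating this through the trace-free Hessian identity produces exactly the boundary term $-2\int_{\partial M}\kappa\,\Vert\nabla_{\partial M}(f|_{\partial M})\Vert^{2}\,ds$ of \eqref{eq:main}. The sign convention for $\kappa$ (outward normal, $\second\ge 0$ on a convex boundary) is fixed so that this term is nonpositive whenever $\kappa(0)\ge 0$; combined with the manifestly nonpositive interior integrand, this yields the asserted monotonicity.
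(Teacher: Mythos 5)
Your argument is correct and follows essentially the same route as the paper: your ``integrated Bochner formula'' together with the computation $\nu(\Vert\nabla f\Vert^{2})=-2\,\second(\nabla_{\bm}(f|_{\bm}),\nabla_{\bm}(f|_{\bm}))$ under $f_{\nu}=0$ is precisely the Reilly formula (Theorem \ref{thm:Reilly} via Lemma \ref{lem:useful}) that the paper invokes, and the reduction of $\frac{d}{dt}\mathcal{E}_{\partial}$ to $-\int_{M}R\Vert\nabla\log R\Vert^{2}\,dv-\int_{M}R\,\Delta f\,dv$ matches \eqref{eq:proof2.5} after the substitution $\Delta f=\overline{R}-R$. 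The only point you leave implicit is that the boundary integrand at time $t$ involves $\kappa(t)$ rather than $\kappa(0)$, so the final monotonicity claim also uses the preservation of convexity along the flow, which the paper records separately via the evolution equation $\partial_{t}\kappa=\tfrac12\kappa R$ under \eqref{eq:Neumann}.
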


\begin{rem}
The convexity of $\partial M$ is preserved in our setting.
Actually,
we possess the following evolution formula of $\kappa$ along Ricci flow (see e.g., \cite[(3.22)]{S}, \cite[Proposition 2.1]{CM}):
\begin{equation*}
\partial_t \kappa=\frac{1}{2}\kappa R-\frac{1}{2}R_{\nu}.
\end{equation*}
In particular,
under the condition \eqref{eq:Neumann},
we can solve it as follows:
\begin{equation*}
\kappa(t)=\kappa(0)\exp \left(\frac{1}{2} \int^{t}_{0}\,R\, d\xi  \right).
\end{equation*}
This tells us the desired claim.
Once the boundary $\partial M$ becomes convex,
the Gauss-Bonnet theorem and the positivity of $R$ yield that $\chi(M)$ must be positive.
\end{rem}

\begin{rem}\label{rem:main critical}
We mention the critical point of $\mathcal{E}_{\partial}(t)$.
Let $\partial M$ be convex.
Suppose that the time derivative of $\mathcal{E}_{\partial}(t)$ vanishes at time $t_0>0$.
Then by Theorem \ref{thm:bdry Hamilton monotonicity} we see
\begin{equation*}
\nabla f=\nabla \log R,\quad \frac{1}{2}\,R\,g+\nabla^2f=\frac{1}{2}\,\overline{R}\,g,\quad \kappa\, \Vert \nabla_{\partial M} (f|_{\partial M}) \Vert^2=0.
\end{equation*}
The first identity says that
$f$ agrees with $\log R$ up to addition by a constant.
The second one means that
it is a gradient shrinking Ricci soliton.
We now observe the third identity.
In the case where $\kappa$ is positive everywhere,
$f$ must be constant over $\partial M$;
in particular,
$R$ also has the same property due to the first identity and the Neumann boundary conditions \eqref{eq:Neumann} and \eqref{eq:test}.
On the other hand,
when $\kappa=0$,
the boundary $\partial M$ is geodesic;
in particular,
we can consider the double of $M$,
and its universal cover is a smooth gradient shrinking Ricci soliton over $S^2$.
It is well-known that every Ricci soliton on $S^2$ has constant scalar curvature (see e.g., \cite[Corollary 9.11]{CLN}).
Thus $M$ also has constant scalar curvature.
\end{rem}

\begin{rem}
Cortissoz-Murcia \cite{CM} have obtained monotonicity formulas of Perelman type in a similar setting (see \cite[Theorems 3.1 and 3.2]{CM}, and also \cite[Theorem 3.3]{Pu2}).
One can observe that a similar boundary term to that in \eqref{eq:main} appears.
Also, Ni \cite{N} has formulated monotonicity of Perelman type for linear heat equation on static manifolds with boundary (see \cite[Corollary 3.1]{N}, and also \cite[Problem 8.3]{LL}).
In the same manner,
a similar boundary term appears.
\end{rem}

%%%%%%%%%%%%%%%%%%%%%%%%%%%%
\subsection{Perelman type monotonicity on surfaces with boundary}\label{sec:Main theorem2}
Motivated by the question of Ni \cite{N},
Guo \cite{Gu} has examined a relation between the Hamilton entropy functional and the so-called $\mathcal{W}$-functional introduced by Perelman \cite{P} on closed surfaces.
Inspired by \cite{Gu},
we investigate the relation between our entropy functional \eqref{eq:entropy with bdry} and the $\mathcal{W}$-functional on compact surfaces with boundary.

We consider the setting in Subsection \ref{sec:Main theorem}.
We introduce a $\mathcal{W}$-functional of Guo type by
\begin{equation*}
\mathcal{W}_{\partial}(t):=\int_{M}\,[\tau(R-\Vert \nabla \log R \Vert^2)-\log R-\log \tau] \,R\,dv-2\log \tau \,\int_{\partial M}\,\kappa\,ds,
\end{equation*}
where
\begin{equation*}
\tau:=T-t.
\end{equation*}

\begin{rem}\label{rem:relation}
By the inequality \eqref{eq:time2.2} stated below, and the Gauss-Bonnet theorem,
we possess the following relation between $\mathcal{E}_{\partial}(t)$ and $\mathcal{W}_{\partial}(t)$:
\begin{equation*}
\mathcal{W}_{\partial}(t)=\tau\, \left(\frac{d}{dt}\,\mathcal{E}_{\partial}(t)\right)-\mathcal{E}_{\partial}(t)-4\pi \chi(M)\log \tau+\tau \,v(M)\,\overline{R}^2-\log \overline{R}\,\int_{M}\,R\,dv.
\end{equation*}
\end{rem}

Our second main theorem is the following (cf. \cite[Theorem 1.2]{Gu}):

\begin{thm}\label{thm:bdry Guo monotonicity}
Let $(M,g(t))_{t\in [0,T)}$ be a Ricci flow on compact surface with boundary whose initial metric has positive scalar curvature.
We further assume the Neumann type boundary condition \eqref{eq:Neumann}.
Then we have
\begin{align*}\label{eq:main2}
\frac{d}{dt}\,\mathcal{W}_{\partial}(t)&=2\tau \int_{M}\,R \left\Vert \frac{1}{2}Rg+\nabla^2 \log R-\frac{1}{2\tau}g \right\Vert^2\,dv\\ \notag
                                                        &\quad +2\tau \int_{\partial M} \kappa\, \left( R\,\Vert \nabla_{\partial M}\,(\log R)|_{\partial M} \Vert^2   +\frac{1}{\tau^2} \right) ds.
\end{align*}
In particular,
if $\partial M$ is convex with respect to the initial metric, then $\mathcal{W}_{\partial}(t)$ is non-decreasing.
\end{thm}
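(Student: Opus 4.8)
The plan is to differentiate $\mathcal{W}_{\partial}$ directly, using the two–dimensional evolution equations under Ricci flow together with a Bochner identity integrated against the weight $R$, and to keep careful track of the boundary contributions produced by each integration by parts. Throughout write $u:=\log R$. On a surface the flow reads $\partial_t g=-Rg$, so that $\partial_t\,dv=-R\,dv$, $\partial_t R=\Delta R+R^2$, and hence
$\partial_t(R\,dv)=\Delta R\,dv$ and $\partial_t u=\Delta u+\Vert\nabla u\Vert^2+R$.
On the boundary the induced length element obeys $\partial_t\,ds=-\tfrac12 R\,ds$, which combined with $\partial_t\kappa=\tfrac12\kappa R$ (the evolution of $\kappa$ recalled after the statement, under \eqref{eq:Neumann}) gives $\partial_t(\kappa\,ds)=0$; in particular $\int_{\partial M}\kappa\,ds$ is constant in $t$.

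First I would split $\mathcal{W}_{\partial}=A+B$ with $A:=\int_M\phi\,R\,dv$, where $\phi:=\tau(R-\Vert\nabla u\Vert^2)-u-\log\tau$, and $B:=-2\log\tau\int_{\partial M}\kappa\,ds$. Since $\kappa\,ds$ is constant, $\tfrac{d}{dt}B=\tfrac{2}{\tau}\int_{\partial M}\kappa\,ds=2\tau\int_{\partial M}\kappa\,\tfrac{1}{\tau^2}\,ds$, which already produces the $1/\tau^2$ boundary term. For $A$, using $\partial_t(R\,dv)=\Delta R\,dv$ I obtain $\tfrac{d}{dt}A=\int_M(\partial_t\phi)\,R\,dv+\int_M\phi\,\Delta R\,dv$, and Green's identity together with $R_\nu=0$ turns the second integral into $\int_M(\Delta\phi)\,R\,dv-\int_{\partial M}\phi_\nu R\,ds$. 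Because $R_\nu=0$ forces $u_\nu=0$, one has $\phi_\nu=-\tau\,\partial_\nu\Vert\nabla u\Vert^2$, and the elementary boundary identity $\partial_\nu\Vert\nabla u\Vert^2=-2\kappa\Vert\nabla_{\partial M}u\Vert^2$ (valid precisely when $u_\nu=0$, using $\nabla_X\nu=\kappa X$ for $X$ tangent to $\partial M$, the convention for which $\kappa\geq0$ means convex) yields the boundary contribution $-2\tau\int_{\partial M}\kappa\,R\,\Vert\nabla_{\partial M}u\Vert^2\,ds$.

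Next I would compute the pointwise quantity $\partial_t\phi+\Delta\phi$, inserting the evolution equations for $R$ and $u$ and eliminating $\Delta\Vert\nabla u\Vert^2$ through the Bochner formula $\tfrac12\Delta\Vert\nabla u\Vert^2=\Vert\nabla^2 u\Vert^2+\langle\nabla u,\nabla\Delta u\rangle+\tfrac12 R\Vert\nabla u\Vert^2$. After multiplying by $R$ and integrating, the first–order pieces $\langle\nabla u,\nabla\Delta u\rangle$ and $\nabla^2 u(\nabla u,\nabla u)$ are integrated by parts against the weight $R$; since $u_\nu=0$ these particular integrations produce no boundary terms and merely reorganize the interior. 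The decisive step is then the weighted Bochner identity obtained by integrating the Bochner formula against $R$: using Green's identity with $R_\nu=0$ and $\partial_\nu\Vert\nabla u\Vert^2=-2\kappa\Vert\nabla_{\partial M}u\Vert^2$ once more, $\int_M R\Vert\nabla^2 u\Vert^2\,dv$ is expressed through $\int_M R(\Delta u)^2$, $\int_M R\Vert\nabla u\Vert^2\Delta u$, $\int_M R\Vert\nabla u\Vert^4$ and $\int_M R^2\Vert\nabla u\Vert^2$, plus a boundary term $-\int_{\partial M}\kappa R\Vert\nabla_{\partial M}u\Vert^2\,ds$. Substituting this identity, all interior terms involving $(\Delta u)^2$, $\Vert\nabla u\Vert^2\Delta u$, $\Vert\nabla u\Vert^4$ and $R^2\Vert\nabla u\Vert^2$ cancel, leaving exactly $2\tau\int_M R\Vert\tfrac12 Rg+\nabla^2 u-\tfrac{1}{2\tau}g\Vert^2\,dv$ after completing the square (recall that in dimension two $\langle\nabla^2 u,g\rangle=\Delta u$ and $\Vert g\Vert^2=2$). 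Assembling the three boundary contributions, namely $-2\tau\int_{\partial M}\kappa R\Vert\nabla_{\partial M}u\Vert^2$ from the previous paragraph, $+4\tau\int_{\partial M}\kappa R\Vert\nabla_{\partial M}u\Vert^2$ carried by the weighted Bochner identity, and $2\tau\int_{\partial M}\kappa\,\tfrac{1}{\tau^2}$ from $\tfrac{d}{dt}B$, gives precisely $2\tau\int_{\partial M}\kappa\big(R\Vert\nabla_{\partial M}u\Vert^2+\tfrac{1}{\tau^2}\big)\,ds$.

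The main obstacle is the boundary bookkeeping: one must apply Green's identity and the weighted Bochner identity with the correct boundary terms and sign conventions, and verify that the two occurrences of $\int_{\partial M}\kappa R\Vert\nabla_{\partial M}u\Vert^2\,ds$ combine with coefficient $+2\tau$ rather than cancel. Once the weighted Bochner identity is in hand the interior cancellation is purely algebraic, so the care lies entirely in the identity $\partial_\nu\Vert\nabla u\Vert^2=-2\kappa\Vert\nabla_{\partial M}u\Vert^2$ under the Neumann condition and in tracking the factor $\tau$. As a consistency check one can re-derive the formula by differentiating the relation of Remark \ref{rem:relation} between $\mathcal{W}_{\partial}$ and $\mathcal{E}_{\partial}$ and invoking Theorem \ref{thm:bdry Hamilton monotonicity}, although the direct computation above is cleaner. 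The final monotonicity then follows since, when $\partial M$ is convex, $\kappa\geq0$ is preserved and both terms on the right-hand side are manifestly nonnegative.
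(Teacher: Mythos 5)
Your proposal is correct, and the route is organizationally different from the paper's, though the analytic core is the same. The paper never differentiates $\mathcal{W}_{\partial}$ directly: it rewrites $\mathcal{W}_{\partial}(t)=\tau\dot{\mathcal{N}}_{\partial}(t)-\mathcal{N}_{\partial}(t)-4\pi\chi(M)\log\tau$ with $\mathcal{N}_{\partial}(t)=\int_M R\log R\,dv$, reduces everything to the second derivative of $\mathcal{N}_{\partial}$ via \eqref{eq:preparation}, and then reruns Guo's closed-surface computation of $\ddot{\mathcal{N}}_{\partial}$ while tracking boundary terms through Lemma \ref{lem:useful} and the auxiliary fact $(\Delta R)_{\nu}=0$ (Lemma \ref{lem:Laplacenorm}, needed there because $\Delta(\Delta R+R^2)\log R$ is integrated by parts). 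You instead differentiate $\mathcal{W}_{\partial}=\int_M\phi R\,dv-2\log\tau\int_{\partial M}\kappa\,ds$ directly, exploiting $\partial_t(R\,dv)=\Delta R\,dv$ and $\partial_t(\kappa\,ds)=0$; I checked your bookkeeping and it closes: the Green's identity on $\int_M\phi\,\Delta R\,dv$ contributes $-2\tau\int_{\partial M}\kappa R\Vert\nabla_{\partial M}u\Vert^2\,ds$ via $(\Vert\nabla u\Vert^2)_{\nu}=-2\kappa\Vert\nabla_{\partial M}u\Vert^2$ (which is exactly Lemma \ref{lem:useful} with $u_{\nu}\equiv 0$ on $\partial M$), the term $-4\tau\int_M R\langle\nabla\Delta u,\nabla u\rangle\,dv$ carries $+4\tau\int_{\partial M}\kappa R\Vert\nabla_{\partial M}u\Vert^2\,ds$ through the weighted Bochner identity, and the interior terms recombine into the stated square. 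Your approach buys a cleaner boundary analysis — in particular you never need $(\Delta R)_{\nu}=0$, since your only normal derivatives on $\partial M$ are of $R$, $u$ and $\Vert\nabla u\Vert^2$ — while the paper's route buys the intermediate convexity statement $\ddot{\mathcal{N}}_{\partial}\ge 0$ and direct reuse of Guo's published computation. One small precision: the identity $(\Vert\nabla u\Vert^2)_{\nu}=-2\kappa\Vert\nabla_{\partial M}u\Vert^2$ requires $u_{\nu}$ to vanish identically on $\partial M$ (so that $\nabla_{\partial M}u_{\nu}=0$), not merely pointwise; this holds here because \eqref{eq:Neumann} is imposed for all $t$, but it is worth stating explicitly.
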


\begin{rem}\label{rem:main critical2}
Let us discuss the critical point of $\mathcal{W}_{\partial}(t)$.
Let $\partial M$ be convex.
Assume that its derivative vanishes at $t_0>0$.
In view of Theorem \ref{thm:bdry Guo monotonicity},
\begin{equation*}
\frac{1}{2}\,R\,g+\nabla^2 \log R=\frac{1}{2\tau}\,g,\quad \kappa=0.
\end{equation*}
It follows that
it is a gradient shrinking Ricci soliton with geodesic boundary.
From the same discussion as in Remark \ref{rem:main critical},
we can conclude that $M$ has constant scalar curvature.
\end{rem}

%%%%%%%%%%%%%%%%%%%%%%%%%%%%
%%%%%%%%%%%%%%%%%%%%%%%%%%%%
%%%%%%%%%%%%%%%%%%%%%%%%%%%%
\section{Proof}

Let us prove our main theorems.

%%%%%%%%%%%%%%%%%%%%%%%%%%%%
\subsection{Reilly formula}\label{sec:Reilly formulas}
In this subsection,
we recall the Reilly formula,
which is a key ingredient of the proof of our main theorem.
Let $(M,g)$ be a compact manifold with boundary.
The second fundamental form of $\bm$ is defined as
\begin{equation*}
\second(X,Y):=g(\nabla_{X} \nu,Y)
\end{equation*}
for tangent vectors $X,Y$ on $\bm$,
where $\nabla$ denotes the Levi-Civita connection.
Note that
in the two dimensional case,
the geodesic curvature $\kappa$ coincides with $\second(v,v)$ for a unit tangent vector $v$ of $\bm$.
Furthermore,
the mean curvature $H$ is defined as the trace of $\second$.
We possess:
\begin{thm}[\cite{R}]\label{thm:Reilly}
For all $f\in C^{\infty}(M)$, it holds that
\begin{align*}
&\quad \,\int_{M}\, \left(\Delta f \right)^{2}- \ric(\nabla f)-\left\Vert \nabla^2 f \right\Vert^{2} \,dv\\ 
&=\int_{\bm}\, 2f_{\nu}\,\Delta_{\bm} (f|_{\partial M})+f_{\nu}^{2}\,H+\second \left(\nabla_{\bm} (f|_{\bm}), \nabla_{\bm} (f|_{\bm}) \right) \,dv_{\bm}.
\end{align*}
\end{thm}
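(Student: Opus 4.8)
The plan is to derive the formula from the Bochner identity by integrating over $M$ and converting the resulting interior terms into boundary integrals, and then to rewrite those boundary integrals through the extrinsic geometry of $\partial M$. First I would start from the pointwise Bochner formula
\begin{equation*}
\frac{1}{2}\,\Delta \Vert \nabla f \Vert^2 = \Vert \nabla^2 f \Vert^2 + \langle \nabla f, \nabla(\Delta f) \rangle + \ric(\nabla f).
\end{equation*}
Integrating over $M$, the divergence theorem turns the left-hand side into $\int_{\partial M} \nabla^2 f(\nabla f, \nu)\, dv_{\partial M}$, while integrating the cross term by parts gives $\int_{M} \langle \nabla f, \nabla(\Delta f)\rangle\, dv = -\int_{M} (\Delta f)^2\, dv + \int_{\partial M} (\Delta f)\, f_\nu\, dv_{\partial M}$. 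Rearranging, the statement reduces to the purely boundary identity
\begin{equation*}
\int_{M} \left[ (\Delta f)^2 - \ric(\nabla f) - \Vert \nabla^2 f \Vert^2 \right] dv = \int_{\partial M} \left[ (\Delta f)\, f_\nu - \nabla^2 f(\nabla f, \nu) \right] dv_{\partial M},
\end{equation*}
so that the whole problem becomes the algebra of the integrand $(\Delta f) f_\nu - \nabla^2 f(\nabla f, \nu)$ on $\partial M$.

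To handle this boundary integrand I would fix, at each point of $\partial M$, an orthonormal frame $\{e_1, \dots, e_{n-1}, \nu\}$ adapted to the boundary and use the Gauss formula together with the definition $\second(X,Y) = g(\nabla_X \nu, Y)$. Splitting $\nabla f$ into its tangential part $\nabla_{\partial M}(f|_{\partial M})$ and its normal part $f_\nu \nu$, I expect to obtain the decomposition
\begin{equation*}
\Delta f = \Delta_{\partial M}(f|_{\partial M}) + H f_\nu + \nabla^2 f(\nu, \nu),
\end{equation*}
and, using that the Weingarten map $\nabla_{e_i}\nu = \sum_j \second(e_i, e_j)\, e_j$ is tangent to $\partial M$, a companion identity for $\nabla^2 f(\nabla f, \nu)$ in which the term $f_\nu\, \nabla^2 f(\nu,\nu)$ appears and subsequently cancels. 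After substitution the boundary integrand becomes $f_\nu\, \Delta_{\partial M}(f|_{\partial M}) + H f_\nu^2 + \second(\nabla_{\partial M}(f|_{\partial M}), \nabla_{\partial M}(f|_{\partial M})) - \langle \nabla_{\partial M}(f|_{\partial M}), \nabla_{\partial M} f_\nu \rangle$.

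Finally, since $\partial M$ is a closed manifold, I would integrate over $\partial M$ and integrate the last term by parts there, using $\int_{\partial M} \langle \nabla_{\partial M}(f|_{\partial M}), \nabla_{\partial M} f_\nu \rangle\, dv_{\partial M} = -\int_{\partial M} f_\nu\, \Delta_{\partial M}(f|_{\partial M})\, dv_{\partial M}$; this turns the leftover cross term into a second copy of $f_\nu\, \Delta_{\partial M}(f|_{\partial M})$ and produces the coefficient $2$ in the stated formula. The main obstacle I anticipate is the bookkeeping in this boundary computation---keeping the sign conventions for $\second$ and the Weingarten map consistent throughout the tangential--normal splitting, and recognizing that it is precisely the boundary integration by parts, rather than the pointwise identity, that doubles the $f_\nu\, \Delta_{\partial M}(f|_{\partial M})$ term.
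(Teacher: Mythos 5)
Your proof is correct and is essentially the standard argument the paper relies on by citation (Reilly \cite{R}, and \cite[Chapter 8]{Li}): Bochner's formula integrated over $M$, followed by the tangential--normal decomposition of $\Delta f$ and $\nabla^2 f(\nabla f,\nu)$ on $\partial M$ and a final integration by parts on the closed boundary. Your intermediate boundary identity is exactly equivalent to the paper's Lemma \ref{lem:useful}, since $\left(\Vert\nabla f\Vert^2\right)_\nu = 2\,\nabla^2 f(\nabla f,\nu)$, so the two routes coincide.
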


For later convenience,
we also recall the following well-known and useful formula,
which is used in the standard proof of Reilly formula (see e.g., \cite[Chapter 8]{Li}):
\begin{lem}\label{lem:useful}
For all $f\in C^{\infty}(M)$ we have
\begin{align*}
\left(\Vert \nabla f \Vert^{2}\right)_{\nu}=2\,f_{\nu} \left[\Delta f-\Delta_{\bm} (f|_{\bm})-f_{\nu} H  \right]&+2\,g_{\bm}\left(\nabla_{\bm}(f|_{\partial M}),\nabla_{\bm}f_{\nu} \right)\\
&-2\,\second \left(\nabla_{\bm}(f|_{\partial M}),\nabla_{\bm}(f|_{\partial M})  \right). \notag
\end{align*}
\end{lem}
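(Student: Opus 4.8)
The plan is to compute the normal derivative of $\|\nabla f\|^2$ pointwise on $\bm$ by first rewriting it through the Hessian and then splitting the gradient into its tangential and normal parts along the boundary. Since $\|\nabla f\|^2=g(\nabla f,\nabla f)$ and $\nu\bigl(g(\nabla f,\nabla f)\bigr)=2\,g(\nabla_{\nu}\nabla f,\nabla f)=2\,\nabla^2 f(\nu,\nabla f)$, we have
\begin{equation*}
\left(\|\nabla f\|^2\right)_{\nu}=2\,\nabla^2 f(\nu,\nabla f).
\end{equation*}
On $\bm$ the gradient decomposes as $\nabla f=\nabla_{\bm}(f|_{\bm})+f_{\nu}\,\nu$, whence
\begin{equation*}
\left(\|\nabla f\|^2\right)_{\nu}=2\,f_{\nu}\,\nabla^2 f(\nu,\nu)+2\,\nabla^2 f\bigl(\nu,\nabla_{\bm}(f|_{\bm})\bigr),
\end{equation*}
so it remains to identify these two contributions.

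I would handle them via the Gauss formula. For tangent vectors $X,Y$ on $\bm$, the convention $\second(X,Y)=g(\nabla_{X}\nu,Y)$ forces $g(\nabla_X Y,\nu)=-\second(X,Y)$, hence $\nabla_X Y=\nabla^{\bm}_X Y-\second(X,Y)\,\nu$. Substituting this into $\nabla^2 f(X,Y)=X(Yf)-(\nabla_X Y)(f)$ gives the standard tangential decomposition
\begin{equation*}
\nabla^2 f(X,Y)=\nabla^2_{\bm}(f|_{\bm})(X,Y)+f_{\nu}\,\second(X,Y).
\end{equation*}
Tracing this over an orthonormal basis $\{e_i\}$ of $T\bm$ and comparing with $\Delta f=\sum_i \nabla^2 f(e_i,e_i)+\nabla^2 f(\nu,\nu)$ yields
\begin{equation*}
\nabla^2 f(\nu,\nu)=\Delta f-\Delta_{\bm}(f|_{\bm})-f_{\nu} H,
\end{equation*}
which produces the first bracketed term of the claim.

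For the mixed term I would use the symmetry of the Hessian together with the fact that $\nabla_X\nu$ is tangent to $\bm$, so that $g(\nabla_X\nu,\nabla f)=\second\bigl(X,\nabla_{\bm}(f|_{\bm})\bigr)$. This gives
\begin{equation*}
\nabla^2 f(\nu,X)=X(f_{\nu})-\second\bigl(X,\nabla_{\bm}(f|_{\bm})\bigr)
\end{equation*}
for tangent $X$, and specializing $X=\nabla_{\bm}(f|_{\bm})$ turns $X(f_\nu)$ into $g_{\bm}\bigl(\nabla_{\bm}(f|_{\bm}),\nabla_{\bm}f_{\nu}\bigr)$ and leaves $-\second\bigl(\nabla_{\bm}(f|_{\bm}),\nabla_{\bm}(f|_{\bm})\bigr)$. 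Adding the three contributions recovers the stated identity.

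The computation is local and standard, so the only point demanding care is the bookkeeping of signs in the Gauss formula: the sign of the normal component of $\nabla_X Y$ must be matched to the paper's convention $\second(X,Y)=g(\nabla_X\nu,Y)$, since an error there would flip the sign of every $\second$ term. Once that is fixed, the tangential decomposition of the Hessian and its trace follow immediately, and no curvature terms appear because the identity is purely first- and second-order in $f$.
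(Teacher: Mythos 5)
Your proof is correct. The paper gives no proof of this lemma at all---it is quoted as the standard identity underlying the Reilly formula, with a pointer to \cite{Li}---and your argument (writing $\left(\Vert\nabla f\Vert^2\right)_{\nu}=2\,\nabla^2f(\nu,\nabla f)$, splitting $\nabla f=\nabla_{\bm}(f|_{\bm})+f_{\nu}\nu$, and evaluating $\nabla^2f(\nu,\nu)$ and $\nabla^2f(\nu,X)$ via the Gauss formula) is exactly that standard computation, with the signs correctly matched to the convention $\second(X,Y)=g(\nabla_X\nu,Y)$.
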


%%%%%%%%%%%%%%%%%%%%%%%%%%%%
\subsection{Proof of Hamilton type monotonicity}\label{sec:Hamilton}

Let $(M,g(t))_{t\in [0,T)}$ be as in Theorem \ref{thm:bdry Hamilton monotonicity}.
We first verify the following (cf. \cite[Lemma 4.5]{B4}):
\begin{prop}\label{prop:nonnegative}
For all $t\in [0,T)$ we have $\mathcal{E}_{\partial}(t)\geq 0$.
\end{prop}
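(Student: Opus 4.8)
The plan is to recognize $\mathcal{E}_{\partial}(t)$ as a relative-entropy-type quantity and reduce the inequality to a one-line convexity estimate. First I would rewrite the functional in the more suggestive form
\[
\mathcal{E}_{\partial}(t)=\int_{M}\,R\,\log \frac{R}{\overline{R}}\,dv,
\]
which follows immediately from the definition \eqref{eq:entropy with bdry}: since $\log \overline{R}$ is constant in the spatial variable, one has $\log \overline{R}\,\int_{M}R\,dv=\int_{M}R\,\log \overline{R}\,dv$, and subtracting this term off converts \eqref{eq:entropy with bdry} into the displayed expression.

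Then I would invoke the elementary pointwise inequality $\log s\geq 1-s^{-1}$, valid for all $s>0$, applied with $s=R/\overline{R}>0$; here I use that the positivity of $R$ is preserved under the flow, as recorded after \eqref{eq:Neumann}. Multiplying by $R>0$ yields the pointwise bound
\[
R\,\log \frac{R}{\overline{R}}\geq R-\overline{R}.
\]
Integrating over $M$ and using $\int_{M}R\,dv=\overline{R}\,v(M)$, which is just the definition of $\overline{R}$, the right-hand side integrates to zero, so that $\mathcal{E}_{\partial}(t)\geq 0$.

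An equivalent route is to apply Jensen's inequality directly to the convex function $\varphi(x)=x\log x$ against the probability measure $v(M)^{-1}\,dv$, which gives $v(M)^{-1}\int_{M}R\log R\,dv\geq \overline{R}\log \overline{R}$; multiplying through by $v(M)$ produces the same conclusion. I do not anticipate any serious obstacle: the positivity of $R$ is the only structural input, and the estimate is entirely pointwise, so it uses neither the evolution equation nor the boundary condition \eqref{eq:Neumann}. Consequently the bound holds at every $t\in[0,T)$ without further argument.
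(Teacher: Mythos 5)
Your proposal is correct and follows essentially the same route as the paper: rewriting $\mathcal{E}_{\partial}(t)=\int_{M}R\log(R/\overline{R})\,dv$ and applying the pointwise inequality $\log c\geq 1-c^{-1}$ with $c=R/\overline{R}$, then integrating. The Jensen's inequality remark is a valid equivalent restatement but adds nothing beyond the paper's argument.
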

\begin{proof}
We see $\log c\geq 1-c^{-1}$ for all $c>0$,
and hence
\begin{equation*}
\mathcal{E}_{\partial}(t)=\int_{M}\,R \log \left( \frac{R}{\overline{R}}  \right) \,dv\geq \int_{M}\,\left(R-\overline{R}\right) \,dv=0.
\end{equation*}
This proves the desired estimate.
\end{proof}

Before we state the next assertion,
we recall the following evolution formulas along Ricci flow (see e.g., \cite[Corollaries 4.16 and 4.20]{AH}):
\begin{equation}\label{eq:evolution}
\partial_t dv=-R\, dv,\quad \partial_t R=\Delta R+R^2.
\end{equation}

Let us calculate the following:
\begin{lem}\label{lem:time}
\begin{equation*}
\partial_t \left(\log \overline{R} \,\int_{M} R dv\right)=v(M)\overline{R}^2.
\end{equation*}
\end{lem}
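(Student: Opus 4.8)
The plan is to reduce the claim to two elementary facts about the evolving geometry: that the total curvature $\int_M R\,dv$ is conserved along the flow, and that $\overline{R}$ then satisfies the simple ODE $\partial_t \overline{R} = \overline{R}^2$. Once these are in hand, the differentiation in the statement is immediate.

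First I would differentiate the total curvature. Combining the two evolution formulas in \eqref{eq:evolution}, the $R^2$ contributions cancel and we are left with
\begin{equation*}
\partial_t \int_M R\,dv = \int_M (\Delta R + R^2)\,dv - \int_M R^2\,dv = \int_M \Delta R\,dv.
\end{equation*}
By the divergence theorem this equals $\int_{\partial M} R_\nu\,ds$, which vanishes by the Neumann condition \eqref{eq:Neumann}. Hence $\int_M R\,dv$ is constant in $t$. This is the one place where the boundary hypothesis is genuinely used, and I regard it as the crux of the lemma; the remaining steps are purely formal.

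Next I would record the evolution of the volume. Integrating the first formula in \eqref{eq:evolution} gives $\partial_t v(M) = -\int_M R\,dv$. Writing $\overline{R} = \left(\int_M R\,dv\right)/v(M)$, whose numerator is now known to be constant, the quotient rule together with the previous step yields
\begin{equation*}
\partial_t \overline{R} = -\frac{\int_M R\,dv}{v(M)^2}\,\partial_t v(M) = \frac{\left(\int_M R\,dv\right)^2}{v(M)^2} = \overline{R}^2.
\end{equation*}

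Finally, since $\int_M R\,dv$ is constant I would differentiate only the logarithmic factor:
\begin{equation*}
\partial_t\left(\log\overline{R}\int_M R\,dv\right) = \left(\int_M R\,dv\right)\frac{\partial_t \overline{R}}{\overline{R}} = \left(\int_M R\,dv\right)\overline{R} = v(M)\,\overline{R}^2,
\end{equation*}
where the last two equalities use $\partial_t\overline{R} = \overline{R}^2$ and $\int_M R\,dv = v(M)\,\overline{R}$. I do not anticipate any real obstacle beyond verifying that the boundary term truly drops out in the first step, which is exactly what \eqref{eq:Neumann} guarantees.
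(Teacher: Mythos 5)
Your proposal is correct and follows essentially the same route as the paper: both arguments rest on showing $\partial_t\overline{R}=\overline{R}^2$ via \eqref{eq:evolution} and the Neumann condition \eqref{eq:Neumann}, and on the vanishing of $\partial_t\int_M R\,dv$ so that only the logarithmic factor contributes. Your version merely makes the conservation of $\int_M R\,dv$ an explicit intermediate step, which the paper folds into a single quotient-rule computation.
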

\begin{proof}
By using \eqref{eq:evolution} and integration by parts with \eqref{eq:Neumann}, we see
\begin{align*}
\partial_t \overline{R}&=\frac{1}{v(M)^2}\left\{v(M)\left( \int_{M}\,\partial_t R\,dv-\int_{M}\,R^2\,dv\right)-\partial_t v(M) \int_{M}\,R\,dv \right\}=\overline{R}^2.
\end{align*}
It follows that
\begin{align*}
\partial_t \left(\log \overline{R} \,\int_{M} R dv\right)&=\frac{\partial_t \overline{R}}{\overline{R}}\int_{M} R dv+\log \overline{R} \left( \int_{M}\,\partial_t R\,dv-  \int_{M}\,R^2\,dv\right)\\
             &=\frac{\partial_t \overline{R}}{\overline{R}}\int_{M} R dv=v(M)\overline{R}^2.
\end{align*}
We arrive at the desired formula.
\end{proof}

Lemma \ref{lem:time} yields the following:
\begin{lem}\label{lem:time2}
\begin{align}\label{eq:time2.1}
\frac{d}{dt}\mathcal{E}_{\partial}(t)&=\int_{M}\, \left((\Delta R) \log R+R^2\right)\,dv-v(M)\overline{R}^2\\ \label{eq:time2.2}
&=\int_{M}\, \left(-\Vert \nabla \log R \Vert^2+R\right)R\,dv-v(M)\overline{R}^2.
\end{align}
\end{lem}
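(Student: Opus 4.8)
The plan is to split $\mathcal{E}_{\partial}(t)$ into its two summands and exploit what is already available. The second summand $\log\overline{R}\int_M R\,dv$ is handled verbatim by Lemma \ref{lem:time}, so the whole task reduces to differentiating the first summand $\int_M R\log R\,dv$ and then subtracting. First I would apply the product rule, using both evolution formulas in \eqref{eq:evolution}. Writing $\partial_t(R\log R)=(\partial_t R)(\log R+1)=(\Delta R+R^2)(\log R+1)$ together with $\partial_t\,dv=-R\,dv$, the two $R^2\log R$ contributions cancel, and I am left with
\begin{equation*}
\frac{d}{dt}\int_{M} R\log R\,dv=\int_{M}\left((\Delta R)\log R+\Delta R+R^2\right)dv.
\end{equation*}
The middle term drops out: by the divergence theorem and the Neumann condition \eqref{eq:Neumann}, one has $\int_M \Delta R\,dv=\int_{\partial M} R_{\nu}\,ds=0$. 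Subtracting the identity of Lemma \ref{lem:time} then produces \eqref{eq:time2.1} directly.

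To pass from \eqref{eq:time2.1} to \eqref{eq:time2.2} I would integrate by parts once more on the term $\int_M (\Delta R)\log R\,dv$. Invoking the divergence theorem again and discarding the boundary integral by \eqref{eq:Neumann},
\begin{equation*}
\int_{M}(\Delta R)\log R\,dv=-\int_{M} g(\nabla R,\nabla\log R)\,dv.
\end{equation*}
Since $\nabla R=R\,\nabla\log R$, the right-hand side equals $-\int_M R\,\Vert\nabla\log R\Vert^2\,dv$. Gathering this with the unchanged $R^2=R\cdot R$ term gives the factored form $\int_M(-\Vert\nabla\log R\Vert^2+R)R\,dv$ appearing in \eqref{eq:time2.2}.

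I expect no genuine obstacle in this lemma; the content is a disciplined use of the two evolution formulas and two integrations by parts. The single point that must be watched is the bookkeeping of boundary terms: each integration by parts generates a boundary integral of the shape $\int_{\partial M} R_{\nu}(\cdots)\,ds$, and it is exactly the Neumann condition \eqref{eq:Neumann} that annihilates both of them, so that no $\partial M$-contribution survives at this stage. This is consistent with the fact that the boundary terms in the main formula \eqref{eq:main} arise later, from the Reilly formula (Theorem \ref{thm:Reilly}) and Lemma \ref{lem:useful} applied to the potential $f$, rather than from differentiating the entropy itself.
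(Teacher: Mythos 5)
Your proposal is correct and follows essentially the same route as the paper: differentiate $\int_M R\log R\,dv$ with the evolution formulas \eqref{eq:evolution}, subtract the contribution of $\log\overline{R}\int_M R\,dv$ via Lemma \ref{lem:time}, kill $\int_M\Delta R\,dv$ and then $\int_M(\Delta R)\log R\,dv$ by integration by parts using \eqref{eq:Neumann}. The bookkeeping of the vanishing boundary terms is exactly as in the paper's argument.
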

\begin{proof}
The formulas \eqref{eq:evolution}, Lemma \ref{lem:time}, and integration by parts with \eqref{eq:Neumann} imply
\begin{align*}
\frac{d}{dt}\mathcal{E}_{\partial}(t)&=\int_{M}\, \left((\partial_t R)(1+\log R)-R^2 \log R\right)\,dv-\partial_t \left(\log \overline{R} \,\int_{M} R dv\right)\\ \notag
&=\int_{M}\, \left((\Delta R+R^2)(1+\log R)-R^2 \log R\right)\,dv-v(M)\overline{R}^2  \\ \notag
&=\int_{M}\, \left((\Delta R) \log R+R^2\right)\,dv-v(M)\overline{R}^2,
\end{align*}
which is \eqref{eq:time2.1}.
The equality \eqref{eq:time2.2} follows from integration by parts with \eqref{eq:Neumann}.
\end{proof}

We now prove Theorem \ref{thm:bdry Hamilton monotonicity}:
\begin{proof}[Proof of Theorem \ref{thm:bdry Hamilton monotonicity}]
Let $(M,g(t))_{t\in [0,T)}$ and $f$ be as in Theorem \ref{thm:bdry Hamilton monotonicity}.
Using \eqref{eq:time2.2},
we have
\begin{align}\label{eq:proof1}
\frac{d}{dt}\mathcal{E}_{\partial}(t)=\int_{M}\, \left(-R\Vert \nabla \log R \Vert^2+R^2\right)\,dv-v(M)\overline{R}^2.
\end{align}
We also deduce
\begin{equation*}
R^2=(\Delta f)^2-2\overline{R}\, \Delta f+\overline{R}^2
\end{equation*}
from \eqref{eq:test}.
By integration by parts with $f_{\nu}=0$,
we have
\begin{equation}\label{eq:proof2}
\int_{M}\,R^2\,dv=\int_{M}\,(\Delta f)^2\,dv+v(M)\,\overline{R}^2.
\end{equation}
Combining \eqref{eq:proof1} and \eqref{eq:proof2},
we obtain
\begin{equation}\label{eq:proof2.5}
\frac{d}{dt}\mathcal{E}_{\partial}(t)=\int_{M}\, \left(-R\Vert \nabla \log R \Vert^2+(\Delta f)^2\right)\,dv.
\end{equation}

On the other hand,
by integration by parts with $f_{\nu}=0$, and by \eqref{eq:test},
\begin{align}\label{eq:proof3}
&\quad \,\,\int_{M}  R\Vert \nabla f\Vert^2-2(\Delta f)^2+  R\Vert \nabla \log R \Vert^2 \,dv\\ \notag
&=\int_{M}R\Vert \nabla f\Vert^2+2g(\nabla f,\nabla \Delta f)+R\Vert \nabla \log R \Vert^2 \,dv\\ \notag
&=\int_{M}\,R \Vert \nabla f-\nabla \log R\Vert^2\,dv.
\end{align}
Furthermore,
Theorem \ref{thm:Reilly} yields
\begin{equation}\label{eq:proof4}
\int_{M}\,2 \left(\Delta f \right)^{2}- R\Vert \nabla f \Vert^2-2\left\Vert \nabla^2 f \right\Vert^{2} \,dv=2\int_{\bm}\, \second \left(\nabla_{\bm} (f|_{\bm}), \nabla_{\bm} (f|_{\bm}) \right) \,dv_{\bm}.
\end{equation}
Summing up \eqref{eq:proof3} and \eqref{eq:proof4},
we see
\begin{align}\label{eq:proof5}
\int_{M}R\Vert \nabla \log R \Vert^2-2\left\Vert \nabla^2 f \right\Vert^{2} \,dv&=\int_{M}\,R \Vert \nabla f-\nabla \log R\Vert^2\,dv\\ \notag
&\quad \,\,+2\int_{\bm}\, \second \left(\nabla_{\bm} (f|_{\bm}), \nabla_{\bm} (f|_{\bm}) \right) \,dv_{\bm}.
\end{align}
Moreover,
summing up \eqref{eq:proof2.5} and \eqref{eq:proof5} implies
\begin{align*}
&\quad \,\,\frac{d}{dt}\mathcal{E}_{\partial}(t)+\int_{M}\,R \Vert \nabla f-\nabla \log R\Vert^2\,dv+2\int_{\bm}\, \second \left(\nabla_{\bm} (f|_{\bm}), \nabla_{\bm} (f|_{\bm}) \right) \,dv_{\bm}\\
&=\int_{M}\, (\Delta f)^2-2\Vert \nabla^2 f\Vert^2\,dv=-2\int_{M}\, \left\Vert \nabla^2 f-\frac{\Delta f}{2}g \right\Vert^2\,dv.
\end{align*}
We finally apply \eqref{eq:test} to the right hand side.
This completes the proof.
\end{proof}

%%%%%%%%%%%%%%%%%%%%%%%%%%%%
\subsection{Proof of Guo type monotonicity}\label{sec:Guo}

Let $(M,g(t))_{t\in [0,T)}$ be as in Theorem \ref{thm:bdry Guo monotonicity}.
In order to prove Theorem \ref{thm:bdry Guo monotonicity},
we prepare several lemmas.
We start with the following:
\begin{lem}\label{lem:time normal}
\begin{equation*}
\partial_t \nu=\frac{R}{2}\nu.
\end{equation*}
\end{lem}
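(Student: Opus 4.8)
The plan is to derive the evolution of $\nu$ directly from the two conditions that characterize it: that $\nu$ is $g(t)$-orthogonal to $\bm$ and has unit $g(t)$-length, for every $t$. Since the underlying surface $M$ and its boundary $\bm$ are fixed and only the metric evolves, at each boundary point the tangent space is time-independent, and any vector $X$ tangent to $\bm$ may be taken independent of $t$; thus $\nu(t)$ is simply a curve in a fixed vector space and $\partial_t \nu$ makes sense as an ordinary derivative there. The essential input is the two-dimensional identity $\ric=\tfrac{1}{2}R\,g$, which turns the Ricci flow equation $\partial_t g=-2\ric$ into $\partial_t g=-R\,g$.

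First I would differentiate the orthogonality relation $g(t)(\nu,X)=0$ in $t$ for a fixed tangent vector $X$ of $\bm$. The product rule gives $(\partial_t g)(\nu,X)+g(\partial_t\nu,X)=0$, and since $(\partial_t g)(\nu,X)=-R\,g(\nu,X)=0$, we obtain $g(\partial_t\nu,X)=0$ for all such $X$. Hence $\partial_t\nu$ has no component tangent to $\bm$, so $\partial_t\nu=\lambda\,\nu$ for some scalar function $\lambda$ on $\bm$. Here the two-dimensionality enters in an essential way: it is precisely the relation $\ric=\tfrac{1}{2}R\,g$ that makes the cross term $(\partial_t g)(\nu,X)$ vanish, ensuring that $\partial_t\nu$ remains normal.

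Next I would differentiate the normalization $g(t)(\nu,\nu)=1$, which yields $(\partial_t g)(\nu,\nu)+2\,g(\partial_t\nu,\nu)=0$. Using $(\partial_t g)(\nu,\nu)=-R$ together with $g(\partial_t\nu,\nu)=\lambda\,g(\nu,\nu)=\lambda$ from the previous step, this becomes $-R+2\lambda=0$, so $\lambda=\tfrac{R}{2}$ and therefore $\partial_t\nu=\tfrac{R}{2}\nu$, as claimed.

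There is essentially no serious obstacle in this computation; the only point requiring care is the bookkeeping of what is held fixed under the flow. One must keep in mind that $\partial_t$ differentiates the vector $\nu$ in a fixed tangent space while the metric $g$ and the inner products it induces are themselves varying in $t$, so that the product rule is applied to $g(t)(\nu(t),\cdot)$ with both factors time-dependent. Once this is set up correctly, the statement follows from the two scalar identities above.
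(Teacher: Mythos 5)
Your argument is correct and is essentially identical to the paper's proof: the paper likewise differentiates $g(\nu,\partial_1)=0$ for a time-independent coordinate vector $\partial_1$ tangent to $\partial M$ (playing the role of your fixed $X$) to see that $\partial_t\nu$ is normal, and then differentiates $g(\nu,\nu)=1$ to identify the coefficient as $R/2$, using $\partial_t g=-Rg$ in dimension two throughout.
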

\begin{proof}
Let us consider a (time-independent) local coordinate $x_1$ on $\partial M$,
and take the time derivative of $g(\nu,\partial_1)=0$.
From the two dimensional Ricci flow equation and \eqref{eq:Neumann},
\begin{equation*}
0=(\partial_t g)\left(\nu,\partial_1 \right)+g(\partial_t \nu,\partial_1)=(-Rg)(\nu,\partial_1)+g(\partial_t \nu,\partial_1)=g(\partial_t \nu,\partial_1),
\end{equation*}
and hence $\partial_t \nu$ is orthogonal to $\partial M$.
Next,
we take the time derivative of $g(\nu,\nu)=1$.
Then
\begin{equation*}
0=(\partial_t g)\left(\nu,\nu \right)+2g(\partial_t \nu,\nu)=(-Rg)(\nu,\nu)+2g(\partial_t \nu,\nu)=-R+2g(\partial_t \nu,\nu).
\end{equation*}
We arrive at the desired claim.
\end{proof}

Having at hand Lemma \ref{lem:time normal},
we obtain the following:
\begin{lem}\label{lem:Laplacenorm}
\begin{equation*}
(\Delta R)_{\nu}=0.
\end{equation*}
\end{lem}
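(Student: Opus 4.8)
The plan is to convert the spatial quantity $(\Delta R)_{\nu}$ into a time derivative via the evolution equation, and then exploit the Neumann condition together with Lemma \ref{lem:time normal} to kill it. The only delicate point is that the outward normal $\nu$ itself moves in time, so that the operations ``take the normal derivative'' and ``take the time derivative'' do not commute; Lemma \ref{lem:time normal} is precisely what measures this discrepancy.

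First I would use the second equation in \eqref{eq:evolution} to write $\Delta R=\partial_t R-R^2$ on all of $M$, so that on $\partial M$
\begin{equation*}
(\Delta R)_{\nu}=(\partial_t R)_{\nu}-(R^2)_{\nu}=(\partial_t R)_{\nu}-2R\,R_{\nu}=(\partial_t R)_{\nu},
\end{equation*}
where the last equality uses the Neumann condition \eqref{eq:Neumann}. It therefore suffices to show that $(\partial_t R)_{\nu}=0$ on $\partial M$.

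Next I would differentiate the boundary identity $R_{\nu}=0$ in time. Writing $\nu=\nu^i\partial_i$ in a fixed local coordinate system, so that $R_{\nu}=\nu^i\partial_i R$, the product rule gives
\begin{equation*}
0=\partial_t(R_{\nu})=(\partial_t \nu^i)\,\partial_i R+\nu^i\,\partial_i(\partial_t R)=(\partial_t \nu)(R)+(\partial_t R)_{\nu}.
\end{equation*}
Here the first term is the derivative of $R$ in the direction of the moving normal, and the second is exactly $(\partial_t R)_{\nu}$. By Lemma \ref{lem:time normal} we have $\partial_t \nu=\tfrac{R}{2}\nu$, so $(\partial_t \nu)(R)=\tfrac{R}{2}\,R_{\nu}=0$ on $\partial M$ by \eqref{eq:Neumann}. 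Hence $(\partial_t R)_{\nu}=0$, and combined with the previous step this yields $(\Delta R)_{\nu}=0$.

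The main obstacle is the bookkeeping around the time dependence of $\nu$: one must resist the temptation to write $(\partial_t R)_{\nu}=\partial_t(R_{\nu})=\partial_t 0=0$ directly, since $(\partial_t R)_{\nu}$ and $\partial_t(R_{\nu})$ differ by the correction term $(\partial_t\nu)(R)$. That this correction term vanishes is the content of combining Lemma \ref{lem:time normal} with the Neumann condition \eqref{eq:Neumann}, which is exactly what makes the argument close.
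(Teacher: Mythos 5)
Your proposal is correct and is essentially the paper's own argument: the paper likewise writes $(\Delta R)_{\nu}=(d(\partial_t R))(\nu)=(\partial_t(dR))(\nu)=-dR(\partial_t\nu)=-\tfrac{R}{2}R_{\nu}=0$, i.e.\ it trades $\Delta R$ for $\partial_t R$ via the evolution equation, differentiates the boundary identity $R_{\nu}=0$ in time, and uses Lemma \ref{lem:time normal} with \eqref{eq:Neumann} to dispose of the correction term coming from the moving normal. The only difference is notational (coordinates versus differential forms), and your explicit warning about the non-commutation of the normal and time derivatives is exactly the point the paper's chain of equalities encodes.
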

\begin{proof}
From \eqref{eq:evolution}, \eqref{eq:Neumann} and Lemma \ref{lem:time normal},
we deduce
\begin{equation*}
(\Delta R)_{\nu}=(d(\Delta R))(\nu)=(d(\partial_t R))(\nu)=(\partial_t (dR))(\nu)=-dR(\partial_t \nu)=-\frac{R}{2}R_{\nu}=0.
\end{equation*}
We complete the proof.
\end{proof}

We now divide $\mathcal{E}_{\partial}(t)$ into two parts.
We set
\begin{equation*}
\mathcal{N}_{\partial}(t):=\int_{M}\, R\log R \,dv,\quad \mathcal{R}_{\partial}(t):=\log \overline{R} \,\int_{M} R dv
\end{equation*}
such that $\mathcal{N}_{\partial}(t)=\mathcal{E}_{\partial}(t)+\mathcal{R}_{\partial}(t)$.
In view of Remark \ref{rem:relation},
$\mathcal{W}_{\partial}(t)$ can be expressed as
\begin{equation*}
\mathcal{W}_{\partial}(t)=\tau\left( \frac{d}{dt}\mathcal{N}_{\partial}(t)\right)-\mathcal{N}_{\partial}(t)-4\pi \chi(M)\log \tau;
\end{equation*}
in particular,
\begin{equation}\label{eq:preparation}
\frac{d}{dt}\mathcal{W}_{\partial}(t)=\tau\left(  \frac{d^2}{dt^2}\mathcal{N}_{\partial}(t)-\frac{2}{\tau}\frac{d}{dt}\mathcal{N}_{\partial}(t)+\frac{4\chi(M)}{\tau^2}   \right).
\end{equation}
Therefore,
it suffices to calculate the first two derivatives of $\mathcal{N}_{\partial}(t)$.
Thanks to Lemmas \ref{lem:useful} and \ref{lem:Laplacenorm},
we see the following (cf. \cite[Theorem 1.2 and Lemma 2.1]{Gu}):
\begin{lem}
\begin{align}\label{eq:1stderiv}
\frac{d}{dt}\mathcal{N}_{\partial}(t)&=\int_{M}\, \left((\Delta R) \log R+R^2\right)\,dv,\\ \label{eq:2ndtderiv}
\frac{d^2}{dt^2}\mathcal{N}_{\partial}(t)&=2\,\int_{M}\,R\, \left\Vert \frac{1}{2}Rg+\nabla^2 \log R \right\Vert^2 \,dv+2\int_{\partial M} \,\kappa\,R\Vert \nabla_{\partial M}\,(\log R)|_{\partial M} \Vert^2\, ds\\ \notag
&=2\int_{M}\,R \left\Vert \frac{1}{2}Rg+\nabla^2 \log R-\frac{1}{2\tau}g \right\Vert^2\,dv +\frac{2}{\tau} \frac{d}{dt}\mathcal{N}_{\partial}(t)-\,\frac{4\pi \chi(M)}{\tau^2}\\ \notag
&\quad +2\int_{\partial M} \kappa\,\left( R\Vert \nabla_{\partial M}\,(\log R)|_{\partial M} \Vert^2+\frac{1}{\tau^2}\right)\, ds.
\end{align}
\end{lem}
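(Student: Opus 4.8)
The plan is to differentiate $\mathcal{N}_\partial(t)=\int_M R\log R\,dv$ twice in time using the evolution formulas \eqref{eq:evolution}, and then to reorganize the resulting bulk integrals into the squared-norm form by repeated integration by parts; the Neumann condition \eqref{eq:Neumann}, the identity $(\log R)_\nu=R_\nu/R=0$, and Lemmas \ref{lem:useful} and \ref{lem:Laplacenorm} will control every boundary contribution.

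For \eqref{eq:1stderiv} I would simply observe that $\mathcal{N}_\partial=\mathcal{E}_\partial+\mathcal{R}_\partial$, so the formula is immediate from \eqref{eq:time2.1} in Lemma \ref{lem:time2} together with Lemma \ref{lem:time}; alternatively one differentiates directly with \eqref{eq:evolution} and discards $\int_M\Delta R\,dv=\int_{\partial M}R_\nu\,ds=0$.

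The substantial step is the second derivative. First I would record the variation of the Laplacian along the two-dimensional flow $\partial_t g=-Rg$: the Christoffel contribution is proportional to $(n-2)\nabla R$ and hence vanishes for $n=2$, leaving $\partial_t(\Delta R)=R\,\Delta R+\Delta^2R+\Delta(R^2)$. Differentiating \eqref{eq:1stderiv} with \eqref{eq:evolution}, the two terms $(R\,\Delta R)\log R$ cancel and I obtain $\frac{d^2}{dt^2}\mathcal{N}_\partial=\int_M[(\Delta^2R)\log R+(\Delta(R^2))\log R+(\Delta R)^2/R+3R\,\Delta R+R^3]\,dv$. Green's formula applied to the first two terms, together with $(\Delta R)_\nu=0$ (Lemma \ref{lem:Laplacenorm}) and $(\log R)_\nu=0$, converts them into $\int_M(\Delta R)(\Delta\log R)\,dv$ and $\int_M R^2\Delta\log R\,dv$ with no boundary remainder. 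The single boundary term of \eqref{eq:2ndtderiv} is then produced by a weighted Bochner identity: multiplying $\frac12\Delta\Vert\nabla\log R\Vert^2=\Vert\nabla^2\log R\Vert^2+\langle\nabla\log R,\nabla\Delta\log R\rangle+\frac{R}{2}\Vert\nabla\log R\Vert^2$ by $R$, integrating, and integrating by parts, the boundary integral $\int_{\partial M}R\,(\Vert\nabla\log R\Vert^2)_\nu\,ds$ is evaluated by Lemma \ref{lem:useful} with $(\log R)_\nu=0$ and $\second(X,X)=\kappa\Vert X\Vert^2$, yielding exactly $-2\int_{\partial M}\kappa R\Vert\nabla_{\partial M}(\log R)|_{\partial M}\Vert^2\,ds$. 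Substituting $\Delta R=R(\Delta\log R+\Vert\nabla\log R\Vert^2)$ and expanding $2R\Vert\frac12Rg+\nabla^2\log R\Vert^2=R^3+2R^2\Delta\log R+2R\Vert\nabla^2\log R\Vert^2$ reduces the first equality of \eqref{eq:2ndtderiv} to a polynomial identity in $\Delta\log R$, $\Vert\nabla\log R\Vert^2$ and $R$, whose sole leftover is a multiple of $\int_M R^2(\Delta\log R+2\Vert\nabla\log R\Vert^2)\,dv$; this vanishes because $\int_M R^2\Delta\log R\,dv=-2\int_M R^2\Vert\nabla\log R\Vert^2\,dv$ after one more integration by parts.

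The second equality in \eqref{eq:2ndtderiv} is then purely algebraic. Completing the square via $\Vert\frac12Rg+\nabla^2\log R\Vert^2=\Vert\frac12Rg+\nabla^2\log R-\frac1{2\tau}g\Vert^2+\frac1\tau\tr(\frac12Rg+\nabla^2\log R)-\frac1{2\tau^2}$, with trace $R+\Delta\log R$, I would invoke $\int_M R(R+\Delta\log R)\,dv=\frac{d}{dt}\mathcal{N}_\partial$ (this is \eqref{eq:1stderiv} after an integration by parts) and the Gauss--Bonnet relation $\int_{\partial M}\kappa\,ds=2\pi\chi(M)-\frac12\int_M R\,dv$ to generate precisely the terms $\frac2\tau\frac{d}{dt}\mathcal{N}_\partial$, $-\frac{4\pi\chi(M)}{\tau^2}$ and the extra boundary piece $\frac2{\tau^2}\int_{\partial M}\kappa\,ds$. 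I expect the main obstacle to lie in the weighted Bochner step---arranging the weight so that the $R$-weighted boundary integrand $\kappa R\Vert\nabla_{\partial M}(\log R)|_{\partial M}\Vert^2$ appears rather than an unweighted one---and in verifying the cancellation $\int_M R^2(\Delta\log R+2\Vert\nabla\log R\Vert^2)\,dv=0$; both steps rely essentially on the Neumann conditions.
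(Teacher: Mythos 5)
Your proposal is correct and follows essentially the same route as the paper's proof: differentiate $\mathcal{N}_{\partial}$ using \eqref{eq:evolution} to reach $\int_{M}\Delta(\Delta R+R^2)\log R+(\Delta R)^2/R+3R\Delta R+R^3\,dv$, kill the boundary remainders via \eqref{eq:Neumann} and Lemma \ref{lem:Laplacenorm}, produce the single boundary term from Lemma \ref{lem:useful} applied to $(\Vert\nabla\log R\Vert^2)_{\nu}$ with $(\log R)_{\nu}=0$, and close the interior computation with the $R$-weighted Bochner identity (the step the paper delegates to \cite{Gu}), finishing the second equality by completing the square and Gauss--Bonnet. Your verification of the leftover cancellation $\int_{M}R^2(\Delta\log R+2\Vert\nabla\log R\Vert^2)\,dv=0$ is exactly the final identity needed, so the argument is complete.
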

\begin{proof}
The first one \eqref{eq:1stderiv} follows from the same calculation as in the proof of \eqref{eq:time2.1}.
We consider the second one \eqref{eq:2ndtderiv}.
This has been obtained by \cite[Theorem 1.2 and Lemma 2.1]{Gu} for closed surfaces.
By the same calculation (without integration by parts),
one can verify
\begin{equation*}
\frac{d^2}{dt^2}\mathcal{N}_{\partial}(t)=\int_{M}\,\Delta (\Delta R+R^2)\log R+\frac{(\Delta R)^2}{R}+3R \Delta R+R^3\,dv.
\end{equation*}
In what follows,
taking care of the boundary terms,
we make use of integration by parts with \eqref{eq:Neumann}.
First,
we do it for the first term.
It holds that
\begin{align*}
\frac{d^2}{dt^2}\mathcal{N}_{\partial}(t)&=\int_{M}\, (\Delta R+R^2) \Delta \log R+\frac{(\Delta R)^2}{R}+3R \Delta R+R^3\,dv\\
&=\int_{M}\, (\Delta R+R^2) \left(  \frac{\Delta R}{R}-\frac{\Vert \nabla R\Vert^2}{R^2} \right)+\frac{(\Delta R)^2}{R}+3R \Delta R+R^3\,dv.
\end{align*}
Here we used Lemma \ref{lem:Laplacenorm}.
Further,
by applying the integration by parts to the term $\Vert \nabla R\Vert^2$,
\begin{equation*}
\frac{d^2}{dt^2}\mathcal{N}_{\partial}(t)=\int_{M}\, 2(\Delta R)(\Delta \log R)+\Delta R \Vert \nabla \log R\Vert^2+5R \Delta R+R^3\,dv.
\end{equation*}
We now apply integration by parts to the second term.
Then from Lemma \ref{lem:useful} we derive
\begin{align*}
\frac{d^2}{dt^2}\mathcal{N}_{\partial}(t)&=\int_{M}\, 2(\Delta R)(\Delta \log R)+R \Delta \Vert \nabla \log R\Vert^2+5R \Delta R+R^3\,dv\\
                                                             &\quad -\int_{\partial M}\,R (\Vert \nabla \log R\Vert^2)_{\nu}\,ds\\
                                                             &=\int_{M}\, 2(\Delta R)(\Delta \log R)+R \Delta \Vert \nabla \log R\Vert^2+5R \Delta R+R^3\,dv\\
                                                             &\quad +2\int_{\partial M} \kappa\, R\Vert \nabla_{\partial M}\,(\log R)|_{\partial M} \Vert^2\, ds.
\end{align*}
The rest is same as \cite{Gu},
and that is left to the reader.
\end{proof}

\begin{rem}
In view of \eqref{eq:2ndtderiv},
if $\bm$ is convex,
then $\mathcal{N}_{\partial}(t)$ is also convex.
\end{rem}

We are now in a position to prove Theorem \ref{thm:bdry Guo monotonicity}.
\begin{proof}[Proof of Theorem \ref{thm:bdry Guo monotonicity}]
Substituting \eqref{eq:2ndtderiv} into \eqref{eq:preparation},
we complete the proof.
\end{proof}

%%%%%%%%%%%%%%%%%%%%%%%%%%%%%
%%%%%%%%%%%%%%%%%%%%%%%%%%%%%
%%%%%%%%%%%%%%%%%%%%%%%%%%%%%
%\section{Two dimensional case}
%
%%%%%%%%%%%%%%%%%%%%%%%%%%%%%
%\subsection{Proof of the main result}\label{sec:Proof of the main result}
%
%%%%%%%%%%%%%%%%%%%%%%%%%%%%%
%\subsection{$K$-Ricci flow}\label{sec:KRF}

%%%%%%%%%%%%%%%%%%%%%%%%%%%%
%%%%%%%%%%%%%%%%%%%%%%%%%%%%
%%%%%%%%%%%%%%%%%%%%%%%%%%%%
\subsection*{{\rm Acknowledgements}}
The authors thank Professor Jean C. Cortissoz for informing them of \cite{L}.
They are also grateful to the anonymous referees for their useful comments.
The first author was supported by JSPS KAKENHI (JP19K14521).
The second author was supported by JSPS Grant-in-Aid for Scientific Research on Innovative Areas ``Discrete Geometric Analysis for Materials Design" (17H06460).

%%%%%%%%%%%%%%%%%%%%%%%%%%%%

\end{document}